\documentclass[11pt]{article}
\usepackage[T1]{fontenc}
\usepackage{lmodern}
\usepackage[margin=1.15in]{geometry}
\usepackage{amsmath,amssymb,amsthm}
\usepackage[hidelinks]{hyperref}
\hypersetup{
  pdftitle={A counterexample to the claw-free Schur-positivity conjecture},
  pdfauthor={Jitendra Prajapati}
}
\newtheorem{theorem}{Theorem}
\newtheorem{lemma}[theorem]{Lemma}
\newtheorem{proposition}[theorem]{Proposition}

\theoremstyle{remark}
\newtheorem{remark}[theorem]{Remark}
\newcommand{\csf}{X}

\title{A counterexample to the claw-free Schur-positivity conjecture}
\author{Jitendra Prajapati\thanks{Independent Researcher. Email:
\href{mailto:jitendraprajapati2603@gmail.com}{\nolinkurl{jitendraprajapati2603@gmail.com}}. Verification code:
\url{https://github.com/infinityscroll/claw-free-schur-counterexample}.}}
\date{July 2026}

\begin{document}
\maketitle

\begin{abstract}
The claw-free Schur-positivity conjecture, recorded by Stanley (1998) and
credited there to Gasharov, asserts that the chromatic symmetric function of
every claw-free graph is Schur-positive. We give a counterexample
on $12$ vertices: the line graph $G$ of the graph obtained from a
$4$-cycle by attaching triangles at two opposite vertices and pendant edges
at the other two satisfies $[s_{(3,3,3,3)}]\,\csf_G = -64$. The coefficient
follows from a short computation by hand and is also reproduced by three exact
implementations. An exhaustive computation over all
$216{,}777$ connected claw-free graphs on at most $11$ vertices shows that
every one is Schur-positive, so $12$ vertices is the minimum order of any
counterexample. A complete census of the $1{,}728{,}404$ connected claw-free
graphs on $12$ vertices finds exactly two non-Schur-positive isomorphism
classes; the other has graph6 code
\texttt{K?\textasciigrave CR@\textasciigrave bAbRB} and coefficient
$[s_{(3,3,3,3)}]=-40$.
\end{abstract}

\section{Introduction}

For a finite simple graph $G = (V, E)$, the chromatic symmetric function of
Stanley \cite{Stanley95} is
\[
\csf_G \;=\; \sum_{\kappa} \prod_{v \in V} x_{\kappa(v)},
\]
the sum over all proper colourings $\kappa \colon V \to \{1, 2, \dots\}$.
Gasharov \cite{Gasharov96} proved that the incomparability graph of any
$(3+1)$-free poset is Schur-positive. Stanley
\cite[Conjecture~1.4]{Stanley98}, crediting Gasharov, conjectured that Schur
positivity extends to all claw-free graphs, i.e.\ graphs with no induced
$K_{1,3}$. Subsequent work established further special cases
\cite{Gasharov99}, including, recently, generalized nets \cite{SW25}.

We show the conjecture is false.

\emph{Independent work and chronology.} The example in
Theorem~\ref{thm:main} was
\href{https://github.com/infinityscroll/claw-free-schur-counterexample/commit/7747970833d8d4ce2dadba8f89a184cb52d814a9}
{released in the public verification repository} on July 22, 2026.
Matherne and Morales submitted an independently obtained
preprint on July 23, 2026, giving two counterexamples \cite{MM26}. Their two
graphs are isomorphic to the two graphs classified here: each pair contains
two distinct order-$12$ counterexamples, and Proposition~\ref{prop:minimal}
shows that exactly two such isomorphism classes exist. The independence was
publicly confirmed by the authors; the example was also independently
verified by D.~Grinberg and others \cite{MO}.

\begin{theorem}\label{thm:main}
Let $H$ be the graph obtained from a $4$-cycle $abcda$ by attaching a
triangle at $a$, a triangle at $c$, and pendant edges at $b$ and $d$, and
let $G = L(H)$ be its line graph ($12$ vertices, $22$ edges, graph6 code
\texttt{K?\textasciigrave CRAWWUXIM}). Then $G$ is connected and claw-free,
and
\[
[s_{(3,3,3,3)}]\,\csf_G \;=\; -64 .
\]
\end{theorem}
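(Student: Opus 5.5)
Since $H$ is connected with $12$ edges, its line graph $G$ is connected on $12$ vertices, and every line graph is claw-free. If $G$ had an induced $K_{1,3}$, the centre edge $e=uv$ of $H$ would meet three pairwise disjoint edges. Each of these must contain $u$ or $v$, so two of them share an endpoint, which is impossible. For the edge count I would use $|E(G)|=\sum_{x\in V(H)}\binom{\deg_H x}{2}$. In $H$ the vertices $a,c$ have degree $4$, $b,d$ have degree $3$, the four extra triangle vertices have degree $2$, and the two leaves have degree $1$. This gives $6+6+3+3+4=22$. The graph6 string would be checked by machine only.

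For the coefficient I would avoid the full Schur expansion and use the Hall inner product. We have $[s_\lambda]\csf_G=\langle \csf_G,s_\lambda\rangle$, and by Jacobi--Trudi
\[
s_{(3,3,3,3)}=\det\bigl(h_{3-i+j}\bigr)_{1\le i,j\le 4}=\sum_{w\in S_4}\operatorname{sgn}(w)\,h_{\alpha(w)}, \qquad \alpha(w)_i=3-i+w(i).
\]
Since $\langle \csf_G,h_\alpha\rangle$ is the coefficient of $x^\alpha$ in $\csf_G$, it equals the number $N(\alpha)$ of proper colourings of $G$ whose colour class $i$ has exactly $\alpha_i$ vertices, with $h_0=1$ and $h_{<0}=0$. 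Hence
\[
[s_{(3,3,3,3)}]\csf_G=\sum_{w\in S_4}\operatorname{sgn}(w)\,N(\alpha(w)).
\]
Here $N(\alpha)$ depends only on the multiset of parts, so I would collect the $24$ terms into a signed sum over partitions $\mu\vdash 12$ with at most four parts. I would then discard every $\mu$ with a part exceeding $\alpha(G)$. Stable sets of $G$ are matchings of $H$, and $H$ has $10$ vertices, so $\alpha(G)\le 5$. In fact $\alpha(G)$ is at most $4$ or $5$ depending on the structure, and I would check which. This kills all terms containing $h_6$, and possibly those containing $h_5$.

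The remaining work is to count $N(\mu)$, the number of ordered partitions of $E(H)$ into matchings of the prescribed sizes, i.e.\ proper edge colourings of $H$ with prescribed class sizes. The relevant $\mu$ are $(3,3,3,3)$, $(4,3,3,2)$, $(4,4,2,2)$, $(4,4,3,1)$, $(5,\dots)$ and so on. Each count uses at most four colours. Because $a$ and $c$ have degree $4$, each of the four colours appears exactly once at $a$ and once at $c$. Therefore I would first fix the colouring of the two triangles together with the cycle edges at $a$ and at $c$. I would then propagate around the $4$-cycle through $b$ and $d$, and finally colour the two pendant edges. I would organise the count by the pair of colours used on the cycle edges at $a$ and at $c$, and exploit the symmetry of $H$ (swap $a\leftrightarrow c$, swap $b\leftrightarrow d$, and colour permutations) to reduce to a few cases. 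Each case determines a class-size vector, so one enumeration of all proper $4$-edge-colourings, tallied by class sizes, gives every $N(\mu)$ at once.

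The main obstacle is this bookkeeping: getting the case analysis of the $4$-edge-colourings exactly right and tallying them by class-size vector without error. A single miscount changes the final signed sum, which is a small number ($-64$) arising from cancellation among large counts. I would cross-check the tally against the chromatic-index identity $\sum_\mu \binom{4}{\ldots}N(\mu)=$ number of proper $4$-edge-colourings computed independently, for example via the chromatic polynomial of $G$ at $4$. I would also confirm the final value with the exact implementations mentioned in the abstract.
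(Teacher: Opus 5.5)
Your route is correct and differs from the paper's only in how the Schur coefficient is extracted from the monomial coefficients. The paper inverts the unitriangular Kostka matrix down the dominance order. It computes $[s_{4422}]=128$ and $[s_{4332}]=192$ along the way, then $[s_{3333}]=768-2\cdot 128-3\cdot 192$. You pair $\csf_G$ with the Jacobi--Trudi expansion, which produces the relevant inverse-Kostka column in one step. Of the $24$ permutations, exactly eight have all $\alpha(w)_i\le 4$ ($w=1234,1243,1324,1342,2134,2143,2314,2341$). Once parts $\ge 5$ are excluded, they give
\[
[s_{(3,3,3,3)}]\csf_G=N(3333)-3N(4332)+N(4422)+2N(4431)-N(444),
\]
which agrees with the paper's inversion. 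Your list of relevant $\mu$ omits $(4,4,4)$, which comes from $w=2341$, but $N(444)=0$ because $a$ needs four colours. Your version needs no Kostka numbers and no other Schur coefficients. In exchange it does not produce them, e.g.\ the positivity of $[s_{4422}]$ and $[s_{4332}]$. Both routes consume exactly the same data.

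That data is where your proposal stops short. As written it never computes the $N(\mu)$, it leaves $\alpha(G)\in\{4,5\}$ open, and confirming $-64$ with the programs is not a proof.

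Use the paper's labels: $u,v$ for the triangle at $a$, $x,y$ for the triangle at $c$, and $\ell,m$ for the leaves. The matching number is $4$: a perfect matching of $H$ would contain both pendant edges, and would then need a perfect matching of two disjoint triangles.

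The count then follows from one observation your setup nearly contains. Since $a$ and $c$ are non-adjacent, their stars are eight distinct edges, each star carrying all four colours once. So every colour class already has size $2$, and the type is $2$ plus the distribution of colours on $b\ell,dm,uv,xy$. This alone rules out $(4,4,3,1)$ and $(4,4,4)$.

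Normalize $ab,ad,au,av\mapsto 1,2,3,4$. Then $(bc,cd,cx,cy)$ receives a permutation $p$ with $p_1\neq1$ and $p_2\neq2$, giving $14$ choices. The four remaining edges then have exactly two admissible colours each, independently: $\{2,3,4\}\setminus\{p_1\}$, $\{1,3,4\}\setminus\{p_2\}$, $\{1,2\}$ and $\{p_1,p_2\}$.

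Tallying gives $32$, $160$, $32$ unordered partitions of types $(4,4,2,2)$, $(4,3,3,2)$, $(3,3,3,3)$. Hence $N(4422)=128$, $N(4332)=320$, $N(3333)=768$, and your formula gives $768-960+128=-64$. Your proposed cross-check also closes: $6\cdot 128+12\cdot 320+768=5376=4!\cdot 224=\chi_G(4)$.
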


Since $G$ is a line graph, the theorem also disproves the corresponding
restriction of the conjecture to line graphs, even when the root graph has
maximum degree four.

\section{Proof of Theorem~\ref{thm:main}}

Write the edges of $H$ as
$ab, bc, cd, da$ (the $4$-cycle), $au, av, uv$ (the triangle at $a$),
$cx, cy, xy$ (the triangle at $c$), $b\ell$ and $dm$ (the pendant edges).
Line graphs are claw-free: of any three edges of $H$ meeting a fixed edge
$e$, two share an endpoint of $e$. Connectivity of $G$ follows from
connectivity of $H$.

\subsection{Stable partitions}

Stable (independent) sets of $G = L(H)$ are exactly matchings of $H$, so
the coefficient $[m_\lambda] \csf_G$ counts ordered partitions of $E(H)$
into matchings with sizes $\lambda_1, \lambda_2, \dots$

\begin{lemma}
The matching number of $H$ is $4$.
\end{lemma}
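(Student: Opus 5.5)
The plan is to prove the two inequalities $\nu(H)\ge 4$ and $\nu(H)\le 4$ separately: the lower bound by an explicit matching, the upper bound by a vertex-counting argument. Here $H$ has exactly $10$ vertices: $a,b,c,d,u,v,x,y,\ell,m$.

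For the lower bound it suffices to write down four pairwise disjoint edges. Take
\[
\{uv,\; xy,\; b\ell,\; dm\},
\]
whose endpoints $u,v,x,y,b,\ell,d,m$ are distinct. Other choices also work, for example $\{au, cx, b\ell, dm\}$.

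For the upper bound, a matching of size $5$ on $10$ vertices would be perfect, so it is enough to show $H$ has no perfect matching. The pendant vertices $\ell$ and $m$ have degree one, so any perfect matching must contain $b\ell$ and $dm$. Deleting $b,\ell,d,m$ leaves the graph on $\{a,u,v,c,x,y\}$, whose edges are only the two triangles $auv$ and $cxy$. The $4$-cycle edges $ab,bc,cd,da$ all meet $b$ or $d$, so nothing joins the two triangles. A triangle has three vertices, and a component with an odd number of vertices has no perfect matching, so no perfect matching exists. Hence $\nu(H)\le 4$.

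An equivalent route is Tutte's condition with $S=\{b,d\}$. Removing $S$ leaves four odd components, namely $\{\ell\}$, $\{m\}$, $\{a,u,v\}$ and $\{c,x,y\}$. The Tutte–Berge formula then gives deficiency at least $4-2=2$, so $\nu\le(10-2)/2=4$.

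I expect no real obstacle. The only step needing care is confirming that after removing $b$ and $d$ the two triangles lie in different components. This holds because every edge of the $4$-cycle is incident to $b$ or $d$.
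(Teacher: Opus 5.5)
Your proof is correct and follows the paper's argument essentially verbatim: the same explicit matching $\{uv, xy, b\ell, dm\}$ gives the lower bound, and the upper bound uses the same fact that a perfect matching would be forced to use $b\ell$ and $dm$, leaving two disjoint triangles with no perfect matching. Your Tutte--Berge reformulation with $S=\{b,d\}$ is a valid, optional repackaging of that same obstruction.
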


\begin{proof}
$\{uv,\, xy,\, b\ell,\, dm\}$ is a matching of size $4$. Since $H$ has ten
vertices, a matching of size $5$ would be perfect. It would have to contain
the two leaf edges $b\ell$ and $dm$. Removing their endpoints leaves the two
disjoint triangles on $\{a,u,v\}$ and $\{c,x,y\}$, neither of which has a
perfect matching. Thus no matching of size $5$ exists.
\end{proof}

Consequently every stable partition of $V(G)$ has parts of size at most
$4$, hence at least $\lceil 12/4 \rceil = 3$ parts, and every monomial
coefficient $[m_\lambda]\csf_G$ with $\lambda_1 \ge 5$ vanishes.

\subsection{Counting the four-part stable partitions}

Since the vertex $a$ has degree $4$ in $H$, in any partition of $E(H)$ into
$4$ matchings the four edges $ab, ad, au, av$ lie in four distinct parts;
normalizing their labels to $1, 2, 3, 4$ picks exactly one labelled
representative from each unordered partition. The four edges at the other
degree-$4$ vertex $c$ receive a permutation $p$ of $\{1,2,3,4\}$ on
$(bc, cd, cx, cy)$ subject to $p_1 \ne 1$ (as $bc$ meets $ab$ at $b$) and
$p_2 \ne 2$ (as $cd$ meets $ad$ at $d$); there are
$24 - 6 - 6 + 2 = 14$ such $p$. Given $p$, each of the four remaining
edges $b\ell, dm, uv, xy$ has exactly two admissible labels, independently.
Sorting the class sizes of the resulting $14 \cdot 16 = 224$ labelled
partitions gives the following distribution.
\begin{center}
\begin{tabular}{c|rrr}
$p$ & $(4,4,2,2)$ & $(4,3,3,2)$ & $(3,3,3,3)$ \\ \hline
$2134$ or $2143$ (each) & 4 & 8 & 4 \\
each of the other $12$ & 2 & 12 & 2 \\ \hline
total & 32 & 160 & 32
\end{tabular}
\end{center}
No partition of type $(4,4,4)$ or $(4,4,3,1)$ occurs. With
$[m_\lambda]\csf_G = N_\lambda \prod_j m_j(\lambda)!$, where $N_\lambda$ is
the number of unordered stable partitions of type $\lambda$ and
$m_j(\lambda)$ the multiplicity of $j$ in $\lambda$:
\[
[m_{4422}] = 32 \cdot 2! \cdot 2! = 128, \quad
[m_{4332}] = 160 \cdot 2! = 320, \quad
[m_{3333}] = 32 \cdot 4! = 768 .
\]

\subsection{Triangular inversion}

Since $s_\nu = \sum_\lambda K_{\nu\lambda} m_\lambda$ with the Kostka matrix
unitriangular with respect to dominance, and since all monomial
coefficients with $\lambda_1 \ge 5$ vanish, the only Schur coefficients that
can be nonzero are indexed by partitions with parts at most $4$; and the
coefficients $[s_\nu]\csf_G$ for
$\nu \in \{(4,4,4), (4,4,3,1)\}$ vanish because the corresponding monomial
coefficients (and all higher ones) do. Using
$K_{(4,4,2,2),(4,3,3,2)} = 1$, $K_{(4,4,2,2),(3,3,3,3)} = 2$,
$K_{(4,3,3,2),(3,3,3,3)} = 3$:
\[
[s_{4422}] = 128, \qquad
[s_{4332}] = 320 - 1 \cdot 128 = 192,
\]
\[
[s_{3333}] = 768 - 2 \cdot 128 - 3 \cdot 192 = -64 . \qquad \qed
\]

\begin{remark}
The full Schur expansion of $\csf_G$ (all $77$ coefficients, computed
exactly by three separately implemented routes) has $(3,3,3,3)$ as its only
negative coefficient.
\end{remark}

\section{Verification and minimality}

The coefficient $[s_{(3,3,3,3)}]\csf_G = -64$, and the fact that it is the
only negative Schur coefficient, have been reproduced by three separately
implemented exact routes: normalized edge colourings and semistandard
tableau enumeration; power-sum inclusion--exclusion; and a stable-partition
dynamic program followed by exact Kostka inversion. The implementations
include self-tests on complete graphs, the claw, and brute-force colouring
counts.

\begin{proposition}[computational]\label{prop:minimal}
Every connected claw-free graph on at most $11$ vertices has Schur-positive
chromatic symmetric function. Among the connected claw-free graphs on $12$
vertices, exactly two isomorphism classes are not Schur-positive: the graph
in Theorem~\ref{thm:main} and the graph $G'$ with graph6 code
\texttt{K?\textasciigrave CR@\textasciigrave bAbRB}. Their negative
coefficients are respectively
$[s_{(3,3,3,3)}]\csf_G=-64$ and
$[s_{(3,3,3,3)}]\csf_{G'}=-40$, and neither graph has another negative
Schur coefficient. Consequently these are exactly the two counterexamples
of minimum order.
\end{proposition}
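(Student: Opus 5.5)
The proof is a finite, exhaustive computation, organised so that every step can be checked independently. First, generate all connected claw-free graphs on $n \le 12$ vertices up to isomorphism. I would run \texttt{geng -c} from nauty for each $n$ and filter out graphs with an induced $K_{1,3}$. A graph has an induced claw exactly when some vertex has three pairwise non-adjacent neighbours, which is a cheap local test. As a cross-check, I would generate claw-free graphs directly by canonical augmentation, since adding a vertex to a claw-free graph preserves claw-freeness of the old part. I would then confirm that the counts agree and match the totals $216{,}777$ for $n\le 11$ and $1{,}728{,}404$ for $n=12$.

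Second, compute the Schur expansion of $\csf_G$ for each graph, following the route of the proof of Theorem~\ref{thm:main}. I would enumerate stable partitions by a dynamic program over subsets of $V$. Each step removes the lowest-indexed uncovered vertex together with a stable set containing it, and records the multiset of block sizes. This gives $N_\lambda$ and hence $[m_\lambda]\csf_G = N_\lambda\prod_j m_j(\lambda)!$. I would then invert the Kostka matrix exactly in integer arithmetic, processing $\nu$ in decreasing dominance order and setting $[s_\nu]=[m_\nu]-\sum_{\mu\rhd\nu}K_{\mu\nu}[s_\mu]$. There are at most $p(12)=77$ partitions, so precomputing $K$ once is trivial. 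Two speed-ups apply:
\begin{itemize}
\item Coefficients with $\nu_1$ larger than the independence number are zero.
\item Coefficients with $\ell(\nu)$ smaller than the chromatic number vanish.
\end{itemize}
For each graph I record every $\nu$ with $[s_\nu]<0$.

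Third, report and certify the result. For $n\le 11$ the list of negative coefficients must be empty. For $n=12$ it must consist of exactly two canonical labellings, which I identify with $G$ and $G'$ via canonical forms. Their only negative entries should be at $(3,3,3,3)$, with values $-64$ and $-40$; for $G$ this value is already confirmed by hand in Theorem~\ref{thm:main}. The final ``consequently'' follows directly: by the $n\le11$ statement there is no smaller counterexample, and the $n=12$ classification gives exactly two counterexamples of order $12$.

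The main obstacle is trustworthiness and cost at $n=12$, where roughly $1.7$ million graphs each need a subset dynamic program over $2^{12}$ states. The cost is manageable, but a silent bug would invalidate the positivity claim. I would guard against this in three ways. First, run an independent second implementation, such as power-sum expansion over edge subsets followed by conversion to Schur functions, on all graphs with $n\le 9$ and on both counterexamples. Second, check the specialisation $\csf_G(1^k)=\chi_G(k)$ against brute-force colouring counts. Third, verify the graph counts against OEIS or an independent enumeration.
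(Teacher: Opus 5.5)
Your plan is essentially the paper's own argument. You enumerate connected graphs with \textsf{nauty}'s \texttt{geng}, keep the claw-free ones, and compute every Schur coefficient exactly in integer arithmetic. A stable-partition dynamic program followed by Kostka inversion is one of the paper's three routes, and power-sum inclusion--exclusion is another. You also check the graph counts against OEIS, as the paper does. Your two pruning rules are sound: every $\mu$ dominating $\nu$ satisfies $\mu_1\ge\nu_1$ and $\ell(\mu)\le\ell(\nu)$, so the triangular recursion carries the vanishing of the monomial coefficients over to the Schur coefficients.

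The one step you leave out is in the final ``consequently''. You enumerate only connected graphs (\texttt{geng -c}), so your census shows that $G$ and $G'$ are the only counterexamples of order at most $12$ \emph{among connected graphs}. The conjecture, however, concerns all claw-free graphs. You need the reduction the paper states explicitly. A disconnected claw-free graph on at most $12$ vertices has claw-free components on at most $11$ vertices, and each of these is Schur-positive by the first part. Since $\csf_{G_1\sqcup G_2}=\csf_{G_1}\csf_{G_2}$ and products of Schur-positive functions are Schur-positive (Littlewood--Richardson coefficients are nonnegative), no disconnected graph of order at most $12$ is a counterexample.

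A smaller point concerns your canonical-augmentation cross-check. What makes pruning complete there is heredity: deleting a vertex from a claw-free graph leaves a claw-free graph. It is not that adding a vertex leaves the old part claw-free.
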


The computation enumerated all connected graphs on $1 \le n \le 12$
vertices with \textsf{nauty}'s \texttt{geng}. The connected-graph counts
match \href{https://oeis.org/A001349}{OEIS A001349} exactly.
\begin{center}
\small
\begin{tabular}{c|r|r|r}
order & connected & claw-free & non-Schur-positive \\ \hline
$\leq 8$ & $12{,}113$ & $1{,}145$ & $0$ \\
$9$ & $261{,}080$ & $4{,}494$ & $0$ \\
$10$ & $11{,}716{,}571$ & $26{,}389$ & $0$ \\
$11$ & $1{,}006{,}700{,}565$ & $184{,}749$ & $0$ \\
$12$ & $164{,}059{,}830{,}476$ & $1{,}728{,}404$ & $2$ \\ \hline
total & $165{,}078{,}520{,}805$ & $1{,}945{,}181$ & $2$
\end{tabular}
\end{center}
Every Schur coefficient was computed exactly in integer arithmetic.
Disconnected graphs of order at most $12$ reduce to smaller connected
components, since chromatic symmetric functions multiply under disjoint
union and Schur positivity is closed under products.

\section*{Computational provenance}
The counterexample was found by an exhaustive computer search. The graph,
short proof, exact verification programs, and census logs are available in
the
\href{https://github.com/infinityscroll/claw-free-schur-counterexample}
{public verification repository}.

\end{document}